\newtheorem{corollary}{Corollary}[section]
\newtheorem{lemma}[corollary]{Lemma}
\newtheorem{proposition}[corollary]{Proposition}
\newtheorem{remark}[corollary]{Remark}
\newtheorem{theorem}[corollary]{Theorem}
\newcommand{\mylabel}[1]{\label{#1}
            \ifx\undefined\stillediting
            \else \fbox{$#1$}\fi }
\newcommand{\BE}{\begin{equation}}
\newcommand{\BEQ}[1]{\BE\mylabel{#1}}
\newcommand{\EEQ}{\end{equation}}
\newcommand{\rfb}[1]{\mbox{\rm
   (\ref{#1})}\ifx\undefined\stillediting\else:\fbox{$#1$}\fi}
\def\CC{\rm \hbox{C\kern-.56em\raise.4ex
         \hbox{$\scriptscriptstyle |$}\kern+0.5 em }}
\def\be{\begin{equation}}
\def\ee{\end{equation}}
\def\ds{\displaystyle}
\newcommand{\norm}[2]{\|#1 \| _{#2} }
\newfont{\Blackboard}{msbm10 scaled 1200}
\newcommand{\bl}[1]{\mbox{\Blackboard #1}}
\newcommand{\nline}  {{\bl N}}
\newcommand{\rline}  {{\bl R}}
\newcommand{\half}   {{\frac{1}{2}}}
\def\s{\sigma}
\begin{document}

\title[BBM equation on star-shaped networks]{Well-posedness and stabilization of the Benjamin-Bona-Mahony equation on star-shaped networks}

\author{Ka\"{i}s Ammari}
\address{UR Analysis and Control of PDEs, UR 13ES64, Department of Mathematics, Faculty of Sciences of Monastir, University of Monastir, Tunisia and LMV/UVSQ, Universit\'e Paris-Saclay, France}
\email{kais.ammari@fsm.rnu.tn}

\author{Emmanuelle Cr\'epeau}
\address{LMV, UVSQ, CNRS,  Universit\'e Paris-Saclay, 78035 Versailles, France}
\email{emmanuelle.crepeau@uvsq.fr}

\begin{abstract}
We study the stabilization issue of the Benjamin-Bona-Mahony (BBM) equation on a finite star-shaped network with a damping term acting on the central node. In a first time, we prove the well-posedness of this system. Then thanks to the  frequency domain method, we get the asymptotic stabilization result.
\end{abstract}

\subjclass[2010]{35L05, 35M10}
\keywords{Star-Shaped Network, BBM equation, stabilization, frequency domain method}

 \maketitle

\tableofcontents

 
\section{Introduction} \label{secintro}
\setcounter{equation}{0}

The Benjamin-Bona-Mahony equation (BBM), $$u_t-u_{xxt}+u_x+uu_x=0,$$ is a well known model for one-dimensional, small amplitude long waves in nonlinear dispersive systems and an alternative to the Korteweg-de Vries equation, $$u_t+u_{xxx}+u_x+uu_x=0.$$ 
 
As the dispersive term in BBM  "$-u_{xxt}$" produces a strong smoothing effect for time regularity, the existence of solution is easier to prove than for Korteweg-de Vries equation but the controllability properties are not so good. In \cite{micu}, the exact controllability problem of the linearized BBM equation in a bounded domain is studied. Due to the existence of a limit point in the spectrum of the adjoint problem, the exact controllability fails. Recently, Rosier in \cite{rosier} introduced a boundary feedback law to dissipate the energy for BBM on a bounded domain. He proved that if the Unique Continuation Property holds for BBM, then the origin is asymptotically stable for the damped BBM equation.

We consider in this article the Benjamin-Bona-Mahony equation posed on a star-shaped network, with branches of finite length and  a boundary damping term acting on the central node of the network. Our goal is to prove the stabilization of this model depending if the Unique Continuation Property holds. As we study  a finite star-shaped network, we need some conditions at all external edges.  We choose to impose homogeneous Dirichlet conditions. The system to be studied is modelized by the following one where $N$ is the number of edges and on each edge the BBM equation evolves with some transmission conditions at the central node, the continuity condition and a condition on the time derivative of the flux :

\medskip

\begin{equation*}
\leqno(BBM) 
\left \{
\begin{array}{l}
(\partial_t u_{j}- \partial_x^2 \partial_t u_{j} + \partial_x u_j+u_j\partial _x u_j)(t,x)=0,\\
\hfill{~} \forall \, x \in(0,\ell_j),\, t>0,\, j = 1,...,N, \\
u_{j}(t,0)=u_{k}(t,0), \hfill{~} \forall \, j,k = 1,...,N, \\
 \ds \sum_{j=1}^N \partial_{x} \partial_t u_j (t,0) = \alpha \, u_1(t,0)+\frac{N}{3}u_1^2(t,0), \hfill{~} \forall \, t > 0, \\
u_j(t,\ell_j) = 0, \hfill{~} \forall \, t > 0, \, j= 1,...,N, \\
u_j(0,x)=u_j^0(x), \hfill{~} \forall \, x \in (0,\ell_j),\,   j=1,...,N,
\end{array}
\right.
\end{equation*}
where $\alpha$ is a constant satisfying 
\begin{equation}\label{alpha}\alpha > \frac{N}{2}.\end{equation}

\medskip

In the last few years various physical models of multi-link flexible structures consisting of finitely many interconnected flexible elements such as strings, beams, plates, shells have been mathematically studied.  For details about physical motivation for the models, see \cite{dagerzuazua} and the references therein. 
For interconnected strings and beams, a lot of results have been obtained (see for example \cite{ammari4}, \cite{amjel}, \cite{dagerzuazua}). Recently, we have proved the exponential stabilization of the Korteweg-de Vries equation on a star-shaped network, see \cite{ammari-crepeau}, with a boundary damping term on the central node. In that case, the stabilization occurs when there is at most one critical length in the network.

\medskip
\section{Notations and presentation of the problem}\label{finite}

Now, let us first introduce some notations and definitions which will be used throughout the rest of the paper, in particular some are linked to the notion of $C^{\nu }$- networks, $\nu \in \nline$ (as introduced in \cite{dagerzuazua}). 

\medskip

Let $\Gamma$ be a connected topological graph embedded in $\rline$, with $N$ edges ($N \in \nline^{*}$).  
Let $K=\{k_{j}\, :\, 1\leq j\leq N\}$ be the set of the edges of $\Gamma$. Each edge $k_{j}$ is a Jordan curve in $\rline$ and is assumed to be parametrized by its arc length $x_{j}$ such that
the parametrization $\pi _{j}\, :\, [0,\ell_j]\rightarrow k_{j}\, :\, x_{j}\mapsto \pi _{j}(x_{j})$ is $\nu$-times differentiable, i.e. $\pi _{j}\in C^{\nu }([0,\ell_j],\rline)$ for all $1\leq j\leq N$. 
The $C^{\nu}$- network $\mathcal{T}$  associated with $\Gamma$ is then defined as the union $${\mathcal T}=\bigcup _{j=1}^{N}k_{j}.$$

We study here the stabilization problem of a Benjamin-Bona-Mahomy (see \cite{BBM, BT,rosier}) system on a star-shaped network as in the following figure \ref{fig} for $N =3$. 
\begin{figure}[htbp]
\begin{center} \label{fig}
\includegraphics[scale=1.40]{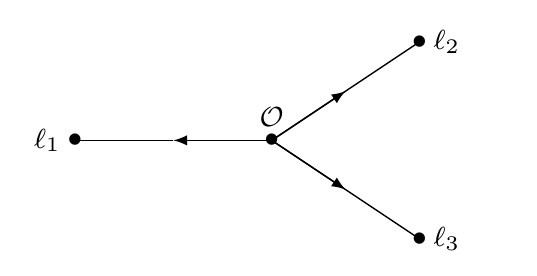}
\caption{Star-Shaped Network for $N =3$}
\end{center}
\end{figure}

More precisely, for each edge $k_{j}$, the scalar function $u_j(t,x)$ for $x \in (0,\ell_j)$ and $t > 0$
contains the information on the displacement of the wave at location $x$ and time $t$, $1 \leq j \leq N$. 

\smallskip

Our problem is the following one : 

\noindent For initial condition $\underline u_0$ in a space to be determined, can we prove the existence of a solution of (BBM) $\underline u$ for all $t>0$ such that $\underline u$ tends to $0$ when $t$ tends to infinity in a space to be determined ?

\begin{remark}
We can get the same stabilization result with (BBM) posed on a tree network, but for a sake of clarity, we only prove our result on a star-shaped network. In the case of a tree, the system can   modelized  the human cardiovascular system for example.
\end{remark}

\medskip

Next section concerns the well-posedness of the solutions of the linearized version of $(BBM)$ called $(LBBM)$.  Then we prove the strong stability of the energy $E(t)$ of the solutions of this linearized system . Then we study the nonlinear version.

Our technique is based on a frequency domain method.
\section{Well-posedness of  systems $(LBBM)$ and $(BBM)$ } \label{wellposed}

In order to study system $(BBM)$ we need a proper functional setting. 
We define the following spaces:  
\begin{itemize}
\item On each edge : 
$$H^1_r (0,\ell_j) = \left\{v \in H^1(0,\ell_j), \, v(\ell_j) = 0 \right\}.$$ 
\item On the whole network
 \begin{itemize}
\item ${\mathbb L}^2(\mathcal{T}) = \ds \oplus_{j=1}^{N} L^2(0,\ell_j), 
$
\item ${\mathbb H}^1_r(\mathcal{T}) = \ds \oplus_{j=1}^{N} H^1_r(0,\ell_j), $

\item $\mbox{for } 1/2<s \leq 3/2$,  
\begin{multline*}{\mathbb H}^s_e (\mathcal{T}) = \bigg\{\underline{u}=(u_1,...,u_N) \in \ds \oplus_{j=1}^N H^s(0,\ell_j), \\
u_j(l_j)=0,\, u_j(0) = u_k (0), \, \forall \, j,k =1, \ldots ,N \bigg\},
\end{multline*}
where ${\mathbb H}^1_e (\mathcal{T}) $ is equipped with the inner product, $ \forall \, \underline{u}, \underline{v} \in {\mathbb H}^1_e (\mathcal{T})$
\begin{equation}\label{ipV}
<\underline{u},\underline{v}>_{{\mathbb H}^1_e (\mathcal{T})} = 
\ds \sum_{j=1}^{N} \int_0^{\ell_j} \left( u_j \overline{v}_j + \frac{du_{j}}{dx}(x)  \overline{\frac{dv_j}{dx}}(x)  \right) \, 
dx.
\end{equation}
\item $\mbox{for } 3/2<s$,  
\begin{multline*}{\mathbb H}^s_e (\mathcal{T}) = \bigg\{\underline{u}=(u_1,...,u_N) \in \ds \oplus_{j=1}^N H^s(0,\ell_j),  u_j(\ell_j)=0,\\
u_j(0) = u_k (0), \sum_{j=1}^N \partial_x u_j(0)=0 \, \forall \, j,k =1, \ldots ,N \bigg\}.
\end{multline*}

\end{itemize}\end{itemize}
In order to prove the well posedness of $(BBM)$, we follow the work of Rosier \cite{rosier}. Let $\underline u^0\in \mathbb H^1_e(\mathcal T)$ and $\underline v=\underline u_t$. Then $\underline v$ solves the following elliptic problem, 

\begin{equation}\label{elliptic}
\left \{
\begin{array}{l}
\ds(1-\partial_x^2 )v_{j}= - \partial_x u_j-u_j\partial _x u_j:=f_j,\\
\hfill{~}\forall \, x \in(0,\ell_j),\, j = 1,\ldots,N, \\
v_{j}(0)=v_{k}(0),\hfill{~} \forall \, j,k = 1,\ldots,N, \\
 \ds \sum_{j=1}^N \partial_{x}v_j (0) = \alpha \, u_1(0)+\frac{N}{3}u_1^2(0):=a, \\
v_j(\ell_j) = 0, \hfill{~} \forall \, j= 1,\ldots,N.
\end{array}
\right.
\end{equation}

Let us define for all $j=1,\ldots,N$, the function $\varphi_j$ by $\varphi_j(x)=\displaystyle\frac{x-\ell_j}{\ell_j} \left(\sum_{i=1}^N \ell_i^{-1} \right)^{-1}$. Thus $\underline\varphi=(\varphi_1,\ldots,\varphi_N)$ satisfies,
\begin{equation*}
\left\{\begin{array}{l}
\varphi_j(\ell_j)=0,\hfill{~}\forall j=1,\ldots,N,\\
\ds\varphi_j(0)= - \left(\sum_{i=1}^N \ell_i^{-1} \right)^{-1} = \varphi_k(0),\hfill{~}\forall j,k=1,\ldots,N,\\
\ds\sum_{j=1}^N \partial_x\varphi_j(0)=1.
\end{array}\right.
\end{equation*}

Then we can write the solution of the elliptic problem \eqref{elliptic} as $\underline v=\underline w+a\underline \varphi$, where $\underline w$ solves the homogeneous elliptic problem :

\begin{equation}\label{elliptic2}
\left \{
\begin{array}{l}
\ds(1-\partial_x^2 )w_{j}=f_j-(1-\partial_x^2 )\varphi_{j}=q_j,\\
\ds \hfill{~}\forall \, x \in(0,\ell_j),\, j = 1,\ldots,N, \\
w_{j}(0)=w_{k}(0),\hfill{~} \forall \, j,k = 1,\ldots,N, \\
 \ds \sum_{j=1}^N \partial_{x}w_j (0) =0, \\
w_j(\ell_j) = 0, \hfill{~} \forall \, j= 1,\ldots,N.
\end{array}
\right.
\end{equation}
Let us define the operator $$
\Delta_{\mathcal{T}} \underline{u}:= 
\left( 
\begin{array}{c}
\frac{d^2 u_1}{dx^2}  \\
 ...
\\
\frac{d^2 u_N}{dx^2}
\end{array}
\right), \,
\forall \, \underline{u} \in \mathcal{D}(\Delta_{\mathcal{T}}) := \mathbb{H}^2_e (\mathcal{T}).
$$

We can easily  prove by usual variational theory and Lax-Milgram theorem that we may define the operator $$(I-\Delta)^{-1}_\mathcal T:\mathbb L^2(\mathcal T)\rightarrow \mathbb H^1_r(\mathcal T)\cap \mathbb H^2_e(\mathcal T)$$ such that $(I-\Delta)^{-1}_\mathcal T(\underline q)=\underline w$ is solution of \eqref{elliptic2}.

Thus the solution $\underline u$ of $(BBM)$ can be written in its integral form as 
\begin{multline}
\underline u(t)=\Gamma(\underline u)(t):=\underline u^0+\int_0^t \left[-(I-\Delta)^{-1}_\mathcal T(\partial_x\underline u+\underline u\partial_x \underline u)(\tau)\right.\\
\left.+\bigg(I-(I-\Delta)^{-1}_\mathcal T(I-\Delta_\mathcal T)\bigg)\left(\varphi_j\right)_j(\alpha \, u_1+\frac{N}{3}u_1^2)(\tau,0)\right]d\tau.
\end{multline}

Let $R>0$ to be chosen latter and $B_R$ the closed ball in $C([0,T],  \mathbb H^1_r(\mathcal T))$ of center 0 and radius $R$.
As $$(I-\Delta)^{-1}_\mathcal T\partial_x:\mathbb H^1_r(\mathcal T)\rightarrow \mathbb H^1_r(\mathcal T),$$ we can easily prove that for $\underline u,\underline v\in B_R$ we have the estimates,
\begin{equation*}
\begin{split}
\ds&\norm{\Gamma(\underline u)(t)-\Gamma(\underline v)(t)}{\mathbb H^1_r(\mathcal T)}\leq CT(1+R)\norm{u-v}{C([0,T], \mathbb  H^1_r(\mathcal T))},\\
\ds&\norm{\Gamma(\underline u)(t)}{\mathbb H^1_r(\mathcal T)}\leq \norm{\underline u^0}{\mathbb H^1_r(\mathcal T)}+CT(1+R)\norm{u}{C([0,T],  \mathbb H^1_r(\mathcal T))}.
\end{split}
\end{equation*}

By chosing $R=2\norm{\underline u^0}{\mathbb H^1_r(\mathcal T)}$ and $T=(2C(1+R))^{-1}$, we can apply the Banach fixed point theorem and we have the existence of a unique solution of $(BBM)$. By using the same argument as \cite{rosier}, thanks to the energy inequality we get the existence of a solution for all time $T$.

\begin{proposition}\label{wpBBMF}
Let $\underline u^0\in \mathbb H^1_e(\mathcal T)$, then there exists a unique solution $\underline u\in C([0,\infty), \mathbb H^1_e(\mathcal T))$ of $(BBM)$. \end{proposition}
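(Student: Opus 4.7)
The plan is to complete the contraction mapping argument outlined after \eqref{elliptic2}, check that the continuity condition at the central node is preserved, and then upgrade the local solution to a global one via an a priori energy bound exploiting the sign condition $\alpha>N/2$ from \eqref{alpha}.

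For the local existence, the two displayed estimates for $\Gamma$ rest on three ingredients: (i) the already-noted boundedness of $(I-\Delta)^{-1}_{\mathcal T}\partial_x:\mathbb H^1_r(\mathcal T)\to\mathbb H^1_r(\mathcal T)$; (ii) the 1D embedding $H^1_r(0,\ell_j)\hookrightarrow L^\infty(0,\ell_j)$, which together with $(I-\Delta)^{-1}_{\mathcal T}:\mathbb L^2(\mathcal T)\to\mathbb H^1_r(\mathcal T)$ yields
$$\|\underline u\,\partial_x\underline u-\underline v\,\partial_x\underline v\|_{\mathbb L^2(\mathcal T)}\le C\bigl(\|\underline u\|_{\mathbb H^1_r}+\|\underline v\|_{\mathbb H^1_r}\bigr)\|\underline u-\underline v\|_{\mathbb H^1_r};$$
and (iii) the trace bound $|u_1(t,0)|\le C\|\underline u(t)\|_{\mathbb H^1_r}$, which controls the boundary source $\alpha u_1+\tfrac{N}{3}u_1^2$ on a ball of radius $R$ by $C(1+R)\|\underline u\|_{\mathbb H^1_r}$. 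Taking $R=2\|\underline u^0\|_{\mathbb H^1_r}$ and $T=(2C(1+R))^{-1}$, the Banach fixed point theorem produces a unique $\underline u\in C([0,T],\mathbb H^1_r(\mathcal T))$. Since $\underline\varphi\in\mathbb H^1_e(\mathcal T)$ and the range of $(I-\Delta)^{-1}_{\mathcal T}$ lies in $\mathbb H^2_e(\mathcal T)\subset\mathbb H^1_e(\mathcal T)$, the entire integrand defining $\Gamma(\underline u)$ stays in the closed subspace $\mathbb H^1_e(\mathcal T)$, so the fixed point in fact lies in $C([0,T],\mathbb H^1_e(\mathcal T))$.

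To extend the solution to $[0,\infty)$, I would multiply the $j$-th equation of (BBM) by $u_j$, integrate over $(0,\ell_j)$, and sum over $j$. Using $u_j(t,\ell_j)=0$, the continuity $u_j(t,0)=u_1(t,0)$, and the transmission condition $\sum_j \partial_x\partial_t u_j(t,0)=\alpha u_1(t,0)+\tfrac{N}{3}u_1^2(t,0)$, the cubic boundary contributions coming from $\int u_j^2\partial_x u_j\,dx$ exactly cancel those produced by the transmission condition, and one is left with
$$\frac{1}{2}\frac{d}{dt}E(t)+\Bigl(\alpha-\frac{N}{2}\Bigr)u_1(t,0)^2=0,\qquad E(t):=\sum_{j=1}^N\int_0^{\ell_j}\!\bigl(u_j^2+(\partial_x u_j)^2\bigr)dx.$$
Under \eqref{alpha} this gives $E(t)\le E(0)$, so $\|\underline u(t)\|_{\mathbb H^1_r(\mathcal T)}$ stays bounded on every interval of existence by $\|\underline u^0\|_{\mathbb H^1_r}$. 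Because the local time step in the first step depends only on this norm, a standard continuation argument extends $\underline u$ to a global solution in $C([0,\infty),\mathbb H^1_e(\mathcal T))$. I expect the main obstacle to be this very cancellation of the cubic boundary contributions: it relies crucially on the coefficient $N/3$ built into the transmission condition, which is precisely what pairs the dispersive and nonlinear boundary terms so that only the dissipative quadratic term $(\alpha-N/2)u_1^2$ survives; without it the sign of $\frac{d}{dt}E$ would be indefinite and global existence in $\mathbb H^1_e(\mathcal T)$ could not be concluded in this simple way.
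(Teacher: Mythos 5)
Your proposal follows exactly the paper's route: the integral reformulation via $(I-\Delta)^{-1}_{\mathcal T}$ and the lifting $\underline\varphi$, the contraction estimates on the ball $B_R$ with $R=2\|\underline u^0\|_{\mathbb H^1_r(\mathcal T)}$ and $T=(2C(1+R))^{-1}$, and then global extension through the energy identity $E'(t)=-(\alpha-\tfrac{N}{2})|u_1(t,0)|^2$, which the paper invokes by reference to Rosier. Your version is correct and in fact supplies details (the $L^\infty$ embedding for the bilinear estimate, the trace bound for the boundary source, and the explicit cancellation of the cubic boundary terms driven by the coefficient $N/3$) that the paper leaves implicit.
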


If we consider the linearized version of $(BBM)$, 

\begin{equation*}
\leqno(LBBM) 
\left \{
\begin{array}{l}
(\partial_t u_{j}- \partial_x^2 \partial_t u_{j} + \partial_x u_j)(t,x)=0,\\
\hfill{~}\forall \, x \in(0,\ell_j),\, t>0,\, j = 1,...,N, \\
u_{j}(t,0)=u_{k}(t,0),\hfill{~} \forall \, j,k = 1,...,N, \\
 \ds \sum_{j=1}^N \partial_{x} \partial_t u_j (t,0) = \alpha \, u_1(t,0), \hfill{~} \forall \, t > 0, \\
u_j(t,\ell_j) = 0, \hfill{~} \forall \, t > 0, \, j= 1,...,N, \\
u_j(0,x)=u_j^0(x), \hfill{~}\forall \, x \in (0,\ell_j),\,   j=1,...,N,
\end{array}
\right.
\end{equation*}
then by using the same proof as before and usual linear theory we get the following existence result for all time $T$.

\begin{proposition}\label{3exist1} 
For an initial datum $\underline u^0\in \mathbb{H}^1_e(\mathcal{T})$, there exists a unique solution $\underline u\in C([0,\,+\infty),\, \mathbb{H}^1_e(\mathcal{T})) $ to problem $(LBBM)$.
Moreover, the solution $\underline{u}$ satisfies \rfb{dissipae1}.
Therefore the energy is decreasing.
\end{proposition}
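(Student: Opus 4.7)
The plan is to recast $(LBBM)$ as an abstract linear Cauchy problem $\dot{\underline u}=A\underline u$, $\underline u(0)=\underline u^0$, on the Hilbert space $\mathbb{H}^1_e(\mathcal{T})$, where the generator is obtained by specializing the integral reformulation already derived for $(BBM)$ to its linear part:
$$
A\underline u := -(I-\Delta)^{-1}_{\mathcal{T}}(\partial_x \underline u) + \alpha\, u_1(0)\,\Bigl(I - (I-\Delta)^{-1}_{\mathcal{T}}(I-\Delta_{\mathcal{T}})\Bigr)\underline\varphi.
$$
The only difference with the nonlinear setting is the absence of the quadratic term $\tfrac{N}{3}u_1^2(0)$ at the central node and of the convection $\underline u\,\partial_x\underline u$ inside the elliptic right-hand side.

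The first step is to verify that $A$ is a bounded linear operator on $\mathbb{H}^1_e(\mathcal{T})$. The trace $\underline u\mapsto u_1(0)$ is continuous on $\mathbb{H}^1_e(\mathcal{T})$ by the one-dimensional Sobolev embedding; the differentiation $\partial_x:\mathbb{H}^1_e(\mathcal{T})\to \mathbb{L}^2(\mathcal{T})$ is continuous by definition of the norm; and by the Lax--Milgram argument already recorded, $(I-\Delta)^{-1}_{\mathcal{T}}:\mathbb{L}^2(\mathcal{T})\to \mathbb{H}^1_r(\mathcal{T})\cap\mathbb{H}^2_e(\mathcal{T})\subset \mathbb{H}^1_e(\mathcal{T})$ is bounded. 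Since $(I-(I-\Delta)^{-1}_{\mathcal{T}}(I-\Delta_{\mathcal{T}}))\underline\varphi$ is a fixed element of $\mathbb{H}^1_e(\mathcal{T})$, multiplication by the scalar $\alpha u_1(0)$ yields a bounded map from $\mathbb{H}^1_e(\mathcal{T})$ into itself. Summing, $A\in\mathcal{L}(\mathbb{H}^1_e(\mathcal{T}))$.

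Once boundedness of $A$ is established, $e^{tA}$ is defined globally by its convergent power series and $\underline u(t):=e^{tA}\underline u^0$ is the unique $C^1([0,\infty),\mathbb{H}^1_e(\mathcal{T}))$ solution of the Cauchy problem. Equivalently, one rerun the Banach fixed point argument used in the proof of Proposition \ref{wpBBMF} on $B_R\subset C([0,T],\mathbb{H}^1_r(\mathcal{T}))$; because the Lipschitz constant of $\Gamma$ is now independent of $R$, the lifespan $T$ no longer depends on $\underline u^0$, so iterating covers $[0,\infty)$.

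Finally, the energy identity to which the statement refers is obtained by testing each equation of $(LBBM)$ against $u_j$, integrating by parts on $(0,\ell_j)$ and summing over $j$; the continuity condition $u_j(t,0)=u_1(t,0)$ together with the flux relation $\sum_j\partial_x\partial_t u_j(t,0)=\alpha u_1(t,0)$ collapses the boundary contributions at the node into a single term of sign $-(\alpha-N/2)u_1(t,0)^2$, which is nonpositive by hypothesis \eqref{alpha}, yielding $E'(t)\leq 0$. The main obstacle is bookkeeping rather than analytic: one must check carefully that $A\underline u$ actually lies in $\mathbb{H}^1_e(\mathcal{T})$ (i.e.\ that the lifting respects the continuity and flux constraints at the central node), and that the boundary contribution in the energy identity has the right sign, which is precisely where assumption \eqref{alpha} is used.
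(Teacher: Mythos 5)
Your proposal is correct and follows essentially the same route as the paper, which disposes of $(LBBM)$ by reusing the integral reformulation $\underline u_t=\mathcal A\underline u$ built from $(I-\Delta)^{-1}_{\mathcal T}$ and the lifting $\underline\varphi$, invoking ``usual linear theory,'' and checking the dissipation identity \rfb{dissipae1} by multiplying by $u_j$ and integrating by parts. You merely make explicit what the paper leaves implicit (boundedness of the generator, hence global existence of $e^{t\mathcal A}$, and the $R$-independence of the Lipschitz constant), so no further comparison is needed.
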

We then define the operator $\mathcal A\in \mathcal L(\mathcal D(\mathcal A), \mathbb H^1_e(\mathcal T))$ by
\begin{equation*}
\mathcal A \underline u=-(I-\Delta)_\tau^{-1}\partial_x \underline u+\alpha\bigg(I-(I-\Delta)_\tau^{-1}(I-\Delta_\tau)\bigg)\underline \varphi u_1(0), \, \forall \underline u\in \mathcal D(\mathcal A),
\end{equation*}
where $\ds\mathcal D(\mathcal A)=\{\underline u\in \mathbb{H}^1_e(\mathcal{T}), \, \sum_{i=1}^n\partial_x(\mathcal A\underline u)_i(0)=\alpha u_1(0)\}$.
\section{Strong stability of $(LBBM)$} \label{resolvent}
We define the natural energy $E(t)$ of a solution $\underline{u} = (u_1,...,u_N)$ of $(LBBM)$ system by
\be 
\label{energy1}
E(t)=\frac{1}{2} \ds \sum_{j=1}^{N} \left( \int_{0}^{\ell_j} \left(|u_{j}(t,x)|^2+ |\partial_x u_{j}(t,x)|^2\right){\rm d}x \right).
\ee

We can easily check that every sufficiently smooth solution of $(LBBM)$ satisfies the following dissipation law, with our choice of $\alpha$, \eqref{alpha}.
\begin{equation}\label{dissipae1}
E^\prime(t) = - \ds \left(\alpha - \frac{N}{2}\right) \, \ds \bigl|u_{1}(t,0)\bigr|^2 \leq 0, 
\end{equation}
and therefore, the energy is a nonincreasing function of the time variable $t$.
We prove a decay result of the energy of system $(LBBM)$, under condition on $\ell_j, j=1,...,N$, for all initial data in the energy space. Our technique is based on a frequency domain method and precisely
we make use of the following result due to Arendt and Batty \cite{arendtbatty}:

\begin{theorem}\label{thmArendtBatty}
Let $(T(t))_{t\geq 0}$ be a bounded $C_0$-semigroup on a reflexive space $X$. Denote by $A$ the generator of $(T(t))$ and by $\sigma(A)$ the spectrum of $A$. If $\sigma(A)\cap i\mathbb{R}$ is countable and no eigenvalue of $A$ lies on the imaginary axis, then $\ds \lim_{t\rightarrow+\infty} T(t)x = 0$ for
all $x\in X$.
\end{theorem}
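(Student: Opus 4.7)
The plan is to establish the conclusion via the Jacobs--Glicksberg--de Leeuw splitting combined with a Tauberian argument on the Laplace transform of the orbits, which is the standard Lyubich--V\~u / Arendt--Batty strategy. First I would observe that since $(T(t))$ is bounded on a reflexive space $X$, every orbit $\{T(t)x : t \geq 0\}$ is bounded and therefore relatively weakly compact. This compactness is exactly what permits the splitting $X = X_r \oplus X_s$, where $X_r$ is the closed linear span of eigenvectors of $A$ with eigenvalues on $i\mathbb{R}$ (the ``reversible'' part) and $X_s$ consists of those $x$ whose weak orbit closure contains $0$. The hypothesis that $A$ has no eigenvalue on the imaginary axis immediately forces $X_r = \{0\}$, so $X = X_s$ and in particular $0$ is a weak cluster point of every orbit.

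The heart of the proof is upgrading this weak cluster property to norm convergence $\|T(t)x\| \to 0$. For each pair $x \in X$, $x^* \in X^*$, the scalar function $f(t) = \langle T(t)x, x^*\rangle$ is bounded and continuous, and its Laplace transform on $\operatorname{Re}\lambda > 0$ equals $\langle R(\lambda,A)x, x^*\rangle$, which extends holomorphically across any $i\eta \in i\mathbb{R}\setminus \sigma(A)$. At each of the countably many exceptional points $i\eta \in \sigma(A) \cap i\mathbb{R}$, the assumption that $i\eta$ is not an eigenvalue of $A$ (and, by reflexivity, not of $A^*$ either) allows one to conclude that the singularity is removable in the sense needed for a contour-shift argument; a Tauberian theorem of Ingham--Karamata type then yields $f(t) \to 0$, i.e.\ $T(t)x \rightharpoonup 0$.

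The main obstacle is then the passage from weak to norm convergence and, simultaneously, the justification of the contour shift through each point of the countable imaginary spectrum without any uniform resolvent bound (which is what distinguishes this setting from Gearhart--Pr\"uss exponential stability, where the sparseness of $\sigma(A)\cap i\mathbb{R}$ must do all the work). I would handle this by restricting to the separable closed $(T(t))$-invariant subspace generated by a single orbit and applying Eberlein's ergodic theorem: the only weak cluster point of the orbit being $0$, together with the boundedness of $(T(t))$, forces $T(t)x \to 0$ strongly on this subspace. Carrying out the Tauberian step cleanly through the countable exceptional set in $\sigma(A)\cap i\mathbb{R}$ is the technical core of the argument.
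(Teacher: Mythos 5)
The paper does not prove this statement: Theorem \ref{thmArendtBatty} is quoted verbatim as a known result of Arendt and Batty \cite{arendtbatty}, so there is no internal proof to compare yours against. Judged on its own merits, your proposal assembles relevant ingredients but is not a proof; the two decisive steps are, respectively, incorrect and deferred.

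The incorrect step is the upgrade from the Jacobs--Glicksberg--de Leeuw splitting to norm convergence. The splitting (legitimate here, since bounded orbits in a reflexive space are relatively weakly compact) gives, once $X_r=\{0\}$, only that $0$ lies in the weak closure of each orbit; it does not make $0$ the \emph{unique} weak cluster point, and even if it did, relative weak compactness would yield at best $T(t)x\rightharpoonup 0$, never norm decay. Eberlein's ergodic theorem concerns Ces\`aro averages and cannot perform this upgrade. More seriously, your final paragraph uses only boundedness, reflexivity and the absence of point spectrum on $i\mathbb{R}$ --- not the countability of $\sigma(A)\cap i\mathbb{R}$ --- so if it worked it would prove a false statement: the translation group on $L^2(\mathbb{R})$ is unitary with empty point spectrum, its orbits tend weakly to $0$, yet $\|T(t)x\|=\|x\|$ for all $t$. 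Countability must enter precisely at the stage that produces norm convergence.

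The deferred step is the Tauberian argument, and the way you frame it is misleading: points of $\sigma(A)\cap i\mathbb{R}$ are genuine singularities of the resolvent, not removable ones, and a countable closed subset of $i\mathbb{R}$ may be dense in an interval, so no literal contour shift ``through'' the exceptional set is available. The actual content of Arendt--Batty is a Tauberian theorem for Laplace transforms whose boundary singular set is countable and disjoint from $\sigma_p(A^*)$, proved by transfinite induction on the Cantor--Bendixson derivatives of that set; the alternative Lyubich--V\~u route quotients out $\{x:\limsup_{t\to\infty}\|T(t)x\|=0\}$, extends the induced isometric semigroup to a group whose generator has countable spectrum, and uses Baire's theorem (a nonempty countable closed set has an isolated point) together with Gelfand's theorem to manufacture an eigenvalue of $A^*$, hence of $A$ by reflexivity, contradicting the hypothesis. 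One of these arguments is exactly the ``technical core'' you acknowledge omitting; without it, the proposal does not establish the theorem.
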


The main result of this section is given by the following result: 

\begin{theorem} \label{lr}
The system $(LBBM)$ is strongly stable, i.e., for all 
$\underline{u}^0 \in \mathbb{H}^1_e (\mathcal{T})$ we have 
\BEQ{EXPDECEXP3nb}
  \lim_{t \rightarrow + \infty} \left\| e^{t{\mathcal A}} \underline{u}^0 \right\|_{\mathbb{H}^1_e (\mathcal{T})}  = 0.
\EEQ
if and only if 
\be
\label{condstab}
\frac{\ell_i}{\ell_j} \notin \mathbb{Q}, \, \forall \, 1 \leq i \neq j \leq N,
\ee 
\end{theorem}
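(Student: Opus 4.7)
The plan is to apply the Arendt--Batty theorem (Theorem~\ref{thmArendtBatty}) to $\mathcal{A}$ on the reflexive Hilbert space $\mathbb{H}^1_e(\mathcal{T})$. Three things must be checked: boundedness of the semigroup $(e^{t\mathcal{A}})_{t\geq 0}$, countability of $\sigma(\mathcal{A})\cap i\mathbb{R}$, and absence of imaginary eigenvalues. The first is immediate from the energy dissipation \eqref{dissipae1}. For the second I would show that $\mathcal{A}$ is in fact a \emph{compact} operator on $\mathbb{H}^1_e(\mathcal{T})$: the principal part $-(I-\Delta)^{-1}_{\mathcal{T}}\partial_x$ gains one derivative via $\partial_x:\mathbb{H}^1_e(\mathcal{T})\to\mathbb{L}^2(\mathcal{T})$ followed by $(I-\Delta)^{-1}_{\mathcal{T}}:\mathbb{L}^2(\mathcal{T})\to\mathbb{H}^1_r(\mathcal{T})\cap\mathbb{H}^2_e(\mathcal{T})$, and the inclusion into $\mathbb{H}^1_e(\mathcal{T})$ is compact by Rellich's theorem on a finite network, while the remaining summand is of rank one. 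Consequently $\sigma(\mathcal{A})$ consists of at most countably many eigenvalues.

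The heart of the matter is ruling out eigenvalues on $i\mathbb{R}$ under \eqref{condstab}. Suppose $\mathcal{A}\underline{\phi}=i\lambda\underline{\phi}$ with $\lambda\in\mathbb{R}$, so that $\underline{u}(t,x)=e^{i\lambda t}\underline{\phi}(x)$ solves $(LBBM)$ with constant energy. The dissipation identity \eqref{dissipae1} then forces $\phi_1(0)=0$, and continuity at the central node gives $\phi_j(0)=0$ for every $j$; the transmission condition collapses to $i\lambda\sum_{j=1}^N\phi_j'(0)=0$. If $\lambda=0$ then $\underline{\phi}$ is a stationary solution, so $\partial_x\phi_j\equiv 0$ on each edge, which combined with $\phi_j(\ell_j)=0$ yields $\underline{\phi}\equiv 0$. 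For $\lambda\neq 0$, each $\phi_j$ satisfies $\phi_j''+(i/\lambda)\phi_j'-\phi_j=0$ with $\phi_j(0)=\phi_j(\ell_j)=0$; a direct characteristic-root computation shows that a non-trivial $\phi_j$ exists only when $|\lambda|<1/2$ and $\lambda=\pm\ell_j/\bigl(2\sqrt{\ell_j^2+\pi^2 n^2}\bigr)$ for some integer $n\geq 1$. Cauchy uniqueness then forces $\phi_j'(0)\neq 0$ whenever $\phi_j\not\equiv 0$, so the vanishing sum $\sum_{j}\phi_j'(0)=0$ requires \emph{at least two} distinct edges carrying a non-trivial eigenfunction. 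Equating the explicit formulas on two such edges $i\neq j$ simplifies to $\ell_i/\ell_j=n_i/n_j\in\mathbb{Q}$, contradicting \eqref{condstab}. Hence $\underline{\phi}\equiv 0$ and Arendt--Batty applies.

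For the converse, given $i\neq j$ with $\ell_i/\ell_j=n_i/n_j\in\mathbb{Q}$, set $\lambda_0=\ell_i/\bigl(2\sqrt{\ell_i^2+\pi^2 n_i^2}\bigr)$, take the Dirichlet eigenfunctions $\psi_i,\psi_j$ of the ODE above on $(0,\ell_i)$ and $(0,\ell_j)$, pick constants $c_i,c_j$ with $c_i\psi_i'(0)+c_j\psi_j'(0)=0$, and define $\underline{\phi}$ by $\phi_i=c_i\psi_i$, $\phi_j=c_j\psi_j$, $\phi_k\equiv 0$ otherwise. All interface conditions are satisfied (common node value $0$, vanishing flux sum, inactive damping since $\phi_1(0)=0$), so $\mathcal{A}\underline{\phi}=i\lambda_0\underline{\phi}$ and $e^{t\mathcal{A}}\underline{\phi}$ has constant non-zero norm, violating \eqref{EXPDECEXP3nb}. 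The main obstacle is the explicit ODE computation that characterizes the Dirichlet spectrum on each edge and the identification of common eigenvalues with rational length ratios; once that lemma is in place, everything else is bookkeeping with the transmission conditions and a standard application of Arendt--Batty.
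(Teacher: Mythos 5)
Your proposal is correct and follows essentially the same route as the paper: apply Arendt--Batty, use the dissipation identity to force the node value of an imaginary eigenfunction to vanish, reduce to the second-order ODE with Dirichlet conditions on each edge, and observe that a non-trivial solution requires two edges sharing an eigenvalue, which happens exactly when some ratio $\ell_i/\ell_j$ is rational. The only differences are that you spell out a few points the paper leaves implicit (compactness of $\mathcal{A}$, the $\lambda=0$ case, boundedness of the semigroup), which is harmless.
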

  
{\it Proof.}
By Theorem \ref{thmArendtBatty}, the proof of Theorem \ref{lr} is based on the following lemma.

\begin{lemma} \label{condsp}
The discrete spectrum of ${\mathcal A}$ contains no point on the imaginary axis if and only if condition \rfb{condstab} is satisfied. 
\end{lemma}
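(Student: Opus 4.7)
The plan is to characterize which $\mu=i\lambda\in i\mathbb{R}$ belong to the point spectrum of $\mathcal{A}$ by separating variables in $(LBBM)$, solving the resulting edge-ODE boundary value problem, and reading off the arithmetic condition on the $\ell_j$. Setting $u_j(t,x)=e^{i\lambda t}\phi_j(x)$ in $(LBBM)$ produces, for $\lambda\neq 0$,
\begin{equation*}
\phi_j''+\tfrac{i}{\lambda}\phi_j'-\phi_j=0\text{ on }(0,\ell_j),\ \phi_j(\ell_j)=0,\ \phi_j(0)=\phi_k(0),\ i\lambda\sum_{j=1}^{N}\phi_j'(0)=\alpha\,\phi_1(0).
\end{equation*}
The case $\lambda=0$ is immediate: $\phi_j'\equiv 0$ together with $\phi_j(\ell_j)=0$ forces $\underline{\phi}\equiv 0$, so $0$ is not an eigenvalue. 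Hereafter I fix $\lambda\neq 0$ and analyse the roots of $r^{2}+(i/\lambda)r-1=0$, which split according to the sign of $4-1/\lambda^{2}$.

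When $|\lambda|\geq 1/2$ the two roots take the form $\pm\omega+i\beta$ with $\omega=\tfrac{1}{2}\sqrt{4-1/\lambda^{2}}\geq 0$ and $\beta=-1/(2\lambda)$. Enforcing $\phi_j(\ell_j)=0$ yields $\phi_j(x)=C_j\,e^{i\beta x}\sinh(\omega(x-\ell_j))$, with the natural modification $\phi_j(x)=B_j(x-\ell_j)e^{i\beta x}$ in the double-root case $\omega=0$. Continuity forces $C_j\sinh(\omega\ell_j)$ to be the same nonzero constant for every $j$ (else $\underline{\phi}\equiv 0$), and inserting this in the transmission condition gives a single complex identity whose imaginary part reduces to $\alpha=N/2$ (respectively $\alpha=-N/2$ in the double-root case), contradicting \eqref{alpha}. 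Hence no imaginary eigenvalue satisfies $|\lambda|\geq 1/2$.

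The interesting regime is $0<|\lambda|<1/2$: both roots are purely imaginary, and the unique solution of the edge-ODE vanishing at $x=\ell_j$ is
\begin{equation*}
\phi_j(x)=C_j\,e^{iqx}\sin\!\bigl(\tfrac{p}{2}(x-\ell_j)\bigr),\qquad q=-\tfrac{1}{2\lambda},\ p=\sqrt{1/\lambda^{2}-4}>0.
\end{equation*}
If $\sin(p\ell_j/2)\neq 0$ for every $j$, continuity gives $C_j=-\phi_1(0)/\sin(p\ell_j/2)$ with $\phi_1(0)\neq 0$, and the transmission condition becomes
\begin{equation*}
\frac{iN}{2\lambda}+\frac{p}{2}\sum_{j=1}^{N}\cot(p\ell_j/2)=\frac{i\alpha}{\lambda},
\end{equation*}
whose imaginary part again forces $\alpha=N/2$, forbidden by \eqref{alpha}. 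A nontrivial eigenfunction can therefore exist only in the degenerate subcase where $I:=\{j:\sin(p\ell_j/2)=0\}$ is nonempty; continuity then forces $\phi_k(0)=0$ for every $k$, so $C_k=0$ for every $k\notin I$, the right-hand side of the transmission condition vanishes, and the condition collapses to the single scalar equation $\sum_{j\in I}(\pm C_j)=0$ in the remaining free coefficients, which has a nonzero solution if and only if $|I|\geq 2$.

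The membership $j\in I$ reads $p\ell_j=2\pi m_j$ for some positive integer $m_j$, so the existence of $p>0$ with $|I|\geq 2$ is equivalent to the existence of indices $i\neq j$ and positive integers $m_i,m_j$ with $\ell_i/\ell_j=m_i/m_j\in\mathbb{Q}$; conversely, any such rational ratio produces an admissible $p$ and hence an eigenvalue $\lambda\in(-1/2,1/2)\setminus\{0\}$. Combining every case yields that $\mathcal{A}$ has a point of its spectrum on $i\mathbb{R}$ if and only if some ratio $\ell_i/\ell_j$ is rational, i.e.\ if and only if \eqref{condstab} fails. The main obstacle I anticipate is the bookkeeping in the degenerate subcase, especially verifying that $|I|=1$ indeed forces $\underline{\phi}\equiv 0$ via the (then single) remaining transmission equation; the rest of the argument is a systematic elimination that reliably produces the contradiction $\alpha=\pm N/2$ whenever the edge-ODE admits non-resonant solutions.
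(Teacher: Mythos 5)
Your proof is correct and lands on the same arithmetic characterization as the paper, but it organizes the node condition differently. The paper first takes the real part of $\langle \mathcal{A}Z,Z\rangle_{\mathbb{H}^1_e(\mathcal{T})}$ for an eigenvector $Z$ with eigenvalue $i\beta$: by the dissipation identity \eqref{1.7} and $\Re\left(i\beta\|Z\|^2\right)=0$, the strict inequality \eqref{alpha} forces $y_j(0)=0$ for all $j$, so the flux condition collapses to the homogeneous one $\sum_j y_j'(0)=0$ and each edge carries a two-point boundary value problem with Dirichlet data at both ends. You instead carry the full $\alpha$-dependent transmission condition through the ODE analysis and extract the contradiction $\alpha=N/2$ from the real or imaginary part of the resulting scalar identity in each non-resonant configuration. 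Both are sound. The paper's route makes the cases $|\beta|\geq 1/2$ immediate (a $\sinh$ or linear profile vanishing at both endpoints is zero) and eliminates your subcase A entirely, since $\phi_1(0)=0$ from the outset puts you directly in your degenerate subcase, which is exactly the paper's reduction to $A_j\sin\bigl(\ell_j\sqrt{1/(4\beta^2)-1}\bigr)=0$ together with $\sum_j A_j=0$; your route buys independence from the energy identity at the price of more bookkeeping with $\coth$ and $\cot$ sums. Two harmless slips: in the double-root case the real and imaginary parts give $\alpha=N/2$ and $\sum_j\ell_j^{-1}=0$ (not $\alpha=-N/2$), either of which is already absurd; and in the $|\lambda|\geq 1/2$ case the real part of your normalized identity, $\omega\sum_j\coth(\omega\ell_j)=0$, is itself a contradiction without invoking \eqref{alpha}. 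The converse direction is handled identically in both proofs, by exhibiting an eigenfunction supported on two edges of commensurable lengths.
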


\begin{proof}
Since ${\mathcal A}$ is compact,  $0 \notin \rho({\mathcal A})$.
Its spectrum $\s({\mathcal A})$ only consists of eigenvalues of ${\mathcal A}$ and $0$, i.e., 
$$
\s({\mathcal A}) = \sigma_{pp} (\mathcal{A}) \cup \sigma_c (\mathcal{A}) = \sigma_{pp} (\mathcal{A}) \cup \left\{ 0 \right\}.
$$
We will
show that the equation
\be 
{\mathcal A} Z = i \beta \, Z
\label{1.10}
\ee
with $Z= \underline{y} \in {\mathcal D}({\mathcal A})$ and $\beta \neq 0$ admits only the trivial solution.

By taking the
inner product of (\ref{1.10}) with $Z \in {\mathbb H}^1_e(\mathcal{T})$ and using
\be
\label{1.7}
\Re <{\mathcal A}Z,Z>_{{\mathbb H}^1_e(\mathcal{T})} = - \, \left(\alpha - \frac{N}{2} \right) \, \left| y_j (0)\right|^2,
\ee
we obtain
that $y_j(0)=0, \, \forall \, j=1,...,N$. Next, we get
to a second order ordinary differential system:
\be
\left\{ \begin{array}{l} 
i \beta y_j - i\beta \, \ds \frac{d^2 y_j}{dx^2} + \ds \frac{dy_j}{dx} = 0, \hfill{~} \forall(0,\ell_j), \, j = 1,...,N,\\
y_j(0) = y_j(\ell_j)=0, \hfill{~} \forall \, j=1,...,N, \\
\ds \sum_{j=1}^N \frac{dy_j}{dx}(0) = 0.
\end{array} 
\right.
\label{1.11}
\ee

\begin{itemize}
\item
If $|\beta| > \half$
\begin{equation*}
y_j(x) = A_j \, e^{- \frac{i}{2\beta} \, x } \, \sinh \left(x \, \sqrt{1 - \frac{1}{4\beta^2}} \right), \, x \in (0,\ell_j),
\end{equation*}
\begin{equation*}
y_j(\ell_j) = 0 \Rightarrow A_j = 0, \forall \, j=1,...,N,
\end{equation*}
then $\underline{y} = 0$.
\item
If $|\beta| = \half$
\begin{equation*}
y_j(x) = 0 \Rightarrow A_j = 0, \forall \, j=1,...,N,
\end{equation*}
then $\underline{y} = 0$.
\item
If $|\beta| < \half$ then
\begin{equation*}
y_j(x) = i \, A_j \, e^{- \frac{i}{2\beta} \, x } \, \sin \left(x \, \sqrt{\frac{1}{4\beta^2} - 1} \right), \, x \in (0,\ell_j),
\end{equation*}
\begin{multline*}
y_j(\ell_j) = 0, \, \ds \sum_{j=1}^N \frac{dy_j}{dx}(0) = 0 \Leftrightarrow\\
A_j \, \sin \left(\ell_j \, \sqrt{\frac{1}{4\beta^2} - 1} \right) = 0, \, \ds \sum_{j=1}^N A_j = 0
\end{multline*}
\begin{equation*}
\Rightarrow \exists \, 1 \leq i \neq j \leq N \; \hbox{such that} \; \ell_i / \ell_j \in \mathbb{Q},
\end{equation*}
and for 
\begin{multline*}
\ell_1/\ell_2 = \frac{p}{q}, \, p,q \in \mathbb{N}^*\\
 \beta = \frac{\ell_1}{2} \, \sqrt{\frac{1}{p^2 \pi^2 + \ell_1^2}} = 
\frac{\ell_2}{2} \, \sqrt{\frac{1}{q^2 \pi^2 + \ell_2^2}}, \end{multline*}
\begin{equation*}
\underline{y} (x) = \left(
\begin{array}{c}
 e^{- \frac{i}{2\beta} x} \, \sin\left(\frac{p \pi}{\ell_1} x \right) \\
- e^{- \frac{i}{2\beta} x} \, \sin\left(\frac{p \pi}{\ell_1} x \right) \\
\bf 0_{N-2,1}
\end{array}
\right).
\end{equation*}
\end{itemize}

So, system \rfb{1.11} admits only the trivial solution if and only if condition \rfb{condstab} is satisfied.

\end{proof}
With Lemma \ref{condsp} we easily deduce the stability result \eqref{EXPDECEXP3nb}.
 
\section{Stabilization of $(BBM)$}

In this section, we study the following nonlinear dissipative $(BBM)$ system:

\begin{equation*}
\leqno(BBM) 
\left \{
\begin{array}{l}
(\partial_t u_{j}- \partial_x^2 \partial_t u_{j} + \partial_x u_j + u_j \partial_x u_j)(t,x)  =0,\\
\hfill{~}\forall \, x \in(0,\ell_j),\, t\in(0,\infty),\, j = 1,...,N, \\
u_{j}(t,0)=u_{k}(t,0),\hfill{~} \forall \, j,k = 1,...,N, \, t > 0, \\
\ds \sum_{j=1}^N \partial_{x} \partial_t u_j (t,0) = \alpha \, u_1(t,0) + \frac{N}{3} \, u_1^2(t,0), \hfill{~} \forall \, t > 0, \\
u_j(t,\ell_j) = 0, \hfill{~} \forall \, t > 0, \, j= 1,...,N, \\
u_j(0,x)=u_j^0(x), \hfill{~} \forall \, x \in (0,\ell_j),\,   j=1,...,N,
\end{array}
\right.
\end{equation*}
where $\alpha > \frac{N}{2}$.

\begin{theorem} \label{BBM}
For all $\underline{u}^0 \in \mathbb{H}^1_e (\mathcal{T})$ there exists a unique solution $\underline{u} \in C([0,+\infty), \mathbb{H}^1_e (\mathcal{T}))$ of $(BBM)$ system and it satisfies 
\begin{equation*}
\label{EI}
E^\prime (t) = - \, \left(\alpha - \frac{N}{2}\right) \left|u_j(t,0)\right|^2, \, j =1,...,N.
\end{equation*}
Moreover 
\begin{equation}
\label{AS}
\underline{u}(t) - \underline{v}(t) \rightharpoonup 0 \; \hbox{in} \; \mathbb{H}^1_e (\mathcal{T}),
\end{equation}
where $\underline{v} \in C([0,+\infty), \mathbb{H}^1_e (\mathcal{T}))$ is a solution of 
$$
\left\{
\begin{array}{ll}
\partial_t v_j - \partial_x^2 \partial_t v_j + \partial_x v_j + v_j \partial_x v_j = 0, \\
\hfill{~} x \in (0,\ell_j), t \in (0,+\infty), \\
v_j(t,0) = v_j(t,\ell_j) = 0, \hfill{~} t \in (0,+\infty), \, j=1,...,N, \\
\ds \sum_{j=1}^N \partial_{x} \partial_t u_j (t,0) = 0, \hfill{~} t \in (0,+\infty).
\end{array}
\right.
$$
We note that if the unique continuation property:

\medskip

$$
\left\{ 
\begin{array}{ll}
\hbox{for any} \;  u^0 \in H^1 (\mathbb{R}), \; \hbox{if the solution } u  \; \hbox{of} \\

\left\{
\begin{array}{ll}
\partial_t u - \partial^2_x \partial_t u + \partial_x u + u \partial_x u = 0, \, (t,x) \in \mathbb{R}^+ \times \mathbb{R}, \\
u(0,x) = u^0(x), \, x \in \mathbb{R} \end{array} \right.
\\
\hbox{satisfies} \; u(t,x) = 0, \, \forall \, (t,x) \in (0,T) \times \omega \;  \\
\hbox{for some non-empty open set} \; \omega \subset \mathbb{R} \\
 \hbox{and some} \; T > 0, \; \hbox{then}  \;  u = 0 
\end{array}
\right.
$$
\medskip

is satisfied, then 

\be
\label{ASb}
\underline{u}(t) \rightharpoonup 0 \; \hbox{in} \; \mathbb{H}^1_e (\mathcal{T}).
\ee

\end{theorem}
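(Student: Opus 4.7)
The plan is to treat the three assertions of Theorem \ref{BBM} in turn; global well-posedness in $C([0,+\infty),\mathbb H^1_e(\mathcal T))$ is already Proposition \ref{wpBBMF}, so I only need the dissipation identity and the two asymptotic statements \eqref{AS}--\eqref{ASb}. For the identity, I would multiply the $j$-th BBM equation by $u_j$, integrate over $(0,\ell_j)$ and use $u_j(t,\ell_j)=0$: one integration by parts in the term $\partial_x^2\partial_t u_j$ produces a central-node contribution $u_1(t,0)\,\partial_x\partial_t u_j(t,0)$, while the transport and Burgers terms contribute $-\tfrac12 u_1(t,0)^2$ and $-\tfrac13 u_1(t,0)^3$ respectively. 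Summing over $j$ and inserting the flux transmission condition $\sum_j\partial_x\partial_t u_j(t,0)=\alpha u_1(t,0)+\tfrac{N}{3}u_1^2(t,0)$ cancels the cubic terms and leaves $E'(t)=-(\alpha-\tfrac{N}{2})|u_1(t,0)|^2$, nonpositive by \eqref{alpha}.

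For the weak attraction \eqref{AS} I would run a LaSalle-type argument. Monotonicity of $E$ gives $u_1(\cdot,0)\in L^2(0,+\infty)$. Pick a sequence $t_n\to+\infty$ and set $\underline u_n(t,x):=\underline u(t+t_n,x)$. The energy bound makes $(\underline u_n)$ uniformly bounded in $L^\infty(0,T;\mathbb H^1_e(\mathcal T))$ for every $T>0$. Using the integral formulation of Section \ref{wellposed} and the fact that $(I-\Delta)^{-1}_{\mathcal T}$ maps $\mathbb L^2(\mathcal T)$ into $\mathbb H^1_r(\mathcal T)\cap\mathbb H^2_e(\mathcal T)$, one also gets a uniform bound on $\partial_t\underline u_n$ in $L^\infty(0,T;\mathbb H^1_r(\mathcal T))$. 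Aubin--Lions compactness, using the compact embedding $\mathbb H^1_e(\mathcal T)\hookrightarrow\mathbb L^2(\mathcal T)$ on each finite branch, then extracts a subsequence converging strongly in $C([0,T],\mathbb L^2(\mathcal T))$ and weakly-$\star$ in $L^\infty(0,T;\mathbb H^1_e(\mathcal T))$ to some $\underline v$. Since $\|u_1(t_n+\cdot,0)\|_{L^2(0,T)}\to 0$, passage to the limit in the weak formulation yields $v_j(t,0)=0$ and identifies $\underline v$ as a solution of the conservative system stated in the theorem.

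Under the UCP, I would conclude by showing that each such limit $\underline v$ must vanish identically on every branch, which, applied to an arbitrary sequence $t_n\to+\infty$, gives \eqref{ASb}. Each $v_j$ is a BBM solution on $(0,\ell_j)$ with $v_j(t,0)=v_j(t,\ell_j)=0$; extending it by zero across $x=0$, using the vanishing afforded by the boundary condition together with the regularity imposed by the equation at the node, produces a BBM solution on an open subset of $\mathbb R$ vanishing on a nonempty open set, and the assumed UCP forces $v_j\equiv 0$. The main obstacle I anticipate is precisely this last step: the UCP hypothesis is stated on the whole real line, whereas $\underline v$ lives on bounded edges, so one must carefully justify that a suitable extension of $v_j$ off $(0,\ell_j)$ is a genuine weak solution of BBM on an open subset of $\mathbb R$ before the UCP can be invoked. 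Once that is in place, the weak convergence \eqref{ASb} follows from the uniqueness of the cluster point $\underline v=0$.
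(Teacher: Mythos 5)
Your proposal follows essentially the same route as the paper: the energy identity obtained by multiplying by $u_j$ and using the node condition to cancel the cubic trace terms, a LaSalle argument along a sequence $t_n\to+\infty$ in which the dissipation forces the central-node trace of the limit to vanish, identification of the limit as a solution of the conservative system, and finally extension of each $v_j$ by zero to $\mathbb{R}$ so that the unique continuation property gives $\underline{v}=0$. The only difference is that you justify the passage to the limit via Aubin--Lions compactness where the paper invokes continuity of the flow map, and you explicitly flag the need to verify that the zero extension is a genuine solution on $\mathbb{R}$ --- a point the paper asserts without comment.
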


\begin{proof}

The  well-posedness as been done in Theorem \ref{wpBBMF}. We now prove \eqref{AS}-\eqref{ASb}

Let $(t_n)_{n \geq 0}$ be a sequence such that $t_n \rightarrow + \infty$ as $n \rightarrow + \infty$. We can assume that $t_{n+1} - t_n \geq T, \, \forall \, n \geq 1$ (by extracting a subsequence if necessary) and that, for some $v^0 \in \mathbb{H}^1_e (\mathcal{T}), \underline{u}(t) \rightharpoonup \underline{v}^0$ in $\mathbb{H}^1_e (\mathcal{T})$ as $n \rightarrow + \infty$. According the continuity of the flow map we have that:
$$
\underline{u}(t_n + \cdot) \rightharpoonup \underline{v}(\cdot) \; \hbox{in} \;  C([0,T],\mathbb{H}^1_e (\mathcal{T})),
$$
where $\underline{v}$ is a solution of $(BBM)$ issued from $\underline{v}^0$ at $t=0$. 
$$
E(t_{n+1}) - E(t_n) + \left(\alpha - \frac{N}{2} \right) \, \int_{t_n}^{t_{n+1}} \left|u_j(t,0)\right|^2 \, dt = 0.
$$
Which implies, $n \rightarrow + \infty$, 
$$
\int_{0}^{T} \left|u_j(t,0)\right|^2 \, dt = 0, \, \forall \, j =1,...,N.
$$
By extending $v_j$ by $0$ for $x \in \mathbb{R} \setminus (0,\ell_j), j=1,...,N,$ and $t \in (0,T)$, 
$$
\left\{
\begin{array}{l}
\partial_t v_j - \partial_x^2 \partial_t v_j + \partial_x v_j + v_j \partial_x v_j = 0, \,x \in \mathbb{R}, t \in (0,T), \\
v_j(t,x) = 0, \hfill{~} x \in \mathbb{R} \setminus (0,\ell_j), t \in (0,T), \, j=1,...,N.
\end{array}
\right.
$$
From the unique continuation property hypothesis, we have $v_j = 0, j=1,...,N.$
\end{proof}

As consequence and according to \cite[Theorem 2.3]{rosier}, we have the following corollary.

\begin{corollary}
Let $s > 1/2$. Then, for all $\underline{u}^0 \in \mathbb{H}^s_e (\mathcal{T})$  there exists a unique solution $\underline{u} \in C([0,+\infty), \mathbb{H}^s_e (\mathcal{T}))$ of $(BBM)$ system.  
Moreover 
\begin{equation*}
\label{ASs}
\underline{u}(t) - \underline{v}(t) \longrightarrow 0 \; \hbox{in} \; \mathbb{H}^s_e (\mathcal{T}), \, \forall \, 1/2 < s < 1,
\end{equation*}
where $\underline{v} \in C([0,+\infty), \mathbb{H}^s_e (\mathcal{T}))$ is a solution of 
$$
\left\{
\begin{array}{l}
\partial_t v_j - \partial_x^2 \partial_t v_j + \partial_x v_j + v_j \partial_x v_j = 0, \\
\hfill{~} x \in (0,\ell_j), t \in (0,+\infty), \\
v_j(t,0) = v_j(t,\ell_j) = 0, \hfill{~} t \in (0,+\infty), \, j=1,...,N, \\
\ds \sum_{j=1}^N \partial_{x} \partial_t v_j (t,0) = 0, \hfill{~} t \in (0,+\infty), \\
v_j(0,x) = v_j^0 (x), \hfill{~} x \in (0,\ell_j), \, j=1,...,N.
\end{array}
\right.
$$
And if the unique continuation property (see Theorem \ref{BBM}) is satisfied, then 
\begin{equation*}
\label{ASbs}
\underline{u}(t) \longrightarrow 0 \; \hbox{in} \; \mathbb{H}^s_e (\mathcal{T}), \, \forall \, 1/2 < s < 1.
\end{equation*}
\end{corollary}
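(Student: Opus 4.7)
The plan is to upgrade both the existence result and the convergence result of Theorem \ref{BBM} from the $s=1$ setting to the scale $\mathbb{H}^s_e(\mathcal{T})$, $s>1/2$, by combining a local-in-time fixed-point argument in $\mathbb{H}^s_e$ with a compactness argument that turns the weak convergence \eqref{AS} into strong convergence for $1/2<s<1$.

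First I would establish the well-posedness part by mimicking the Banach fixed-point proof used for Proposition \ref{wpBBMF}, but working in $C([0,T],\mathbb{H}^s_r(\mathcal{T}))$ instead of $C([0,T],\mathbb{H}^1_r(\mathcal{T}))$. Two ingredients are required: (i) the resolvent-type operator $(I-\Delta)^{-1}_{\mathcal{T}}\partial_x$ maps $\mathbb{H}^s_r(\mathcal{T})$ into itself continuously, which is obtained exactly as in the $\mathbb{H}^1$ case by Lax--Milgram applied in the variational formulation of \eqref{elliptic2} on each edge, combined with elliptic regularity on the intervals $(0,\ell_j)$; and (ii) the Moser-type estimate $\|u\partial_x u\|_{H^{s-1}}\lesssim \|u\|_{H^s}^2$ for $s>1/2$, valid because $H^s(0,\ell_j)$ is a Banach algebra. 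With these, the nonlinear map $\Gamma$ is a contraction on a ball of $C([0,T],\mathbb{H}^s_r(\mathcal{T}))$ for $T$ small enough, and the energy identity \eqref{dissipae1} (which controls the $\mathbb{H}^1$-norm hence a fortiori the $\mathbb{H}^s$-norm for $s\le 1$, and propagates for $s>1$ by differentiating the equation as in \cite{rosier}) yields global existence. This proves the existence and uniqueness part for $s>1/2$.

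Next I would prove the strong convergence statement for $1/2<s<1$. Let $t_n\to+\infty$ and, as in the proof of Theorem \ref{BBM}, extract a subsequence such that $\underline{u}(t_n)\rightharpoonup \underline{v}^0$ in $\mathbb{H}^1_e(\mathcal{T})$. Since on each edge the embedding $H^1(0,\ell_j)\hookrightarrow H^s(0,\ell_j)$ is compact for $s<1$ by Rellich--Kondrachov, the embedding $\mathbb{H}^1_e(\mathcal{T})\hookrightarrow \mathbb{H}^s_e(\mathcal{T})$ is compact as well, so the weak convergence upgrades to strong convergence $\underline{u}(t_n)\to \underline{v}^0$ in $\mathbb{H}^s_e(\mathcal{T})$. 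I would then invoke the continuous dependence of the flow map on initial data in $\mathbb{H}^s_e(\mathcal{T})$, established in the fixed-point step above, to conclude that $\underline{u}(t_n+\cdot)\to \underline{v}(\cdot)$ strongly in $C([0,T],\mathbb{H}^s_e(\mathcal{T}))$, where $\underline{v}$ is the solution of the limiting system from $\underline{v}^0$. This is precisely \eqref{AS} with strong convergence.

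Finally, the unique continuation part is immediate: under the UCP hypothesis, the argument of Theorem \ref{BBM} forces $\underline{v}\equiv 0$ (so $\underline{v}^0=0$ for every such subsequence), and the uniqueness of the limit upgrades the extraction into convergence of the whole net, giving $\underline{u}(t)\to 0$ strongly in $\mathbb{H}^s_e(\mathcal{T})$. The main obstacle is the well-posedness in $\mathbb{H}^s_e(\mathcal{T})$, since it requires redoing the contraction argument in a fractional-order space and carefully treating the boundary condition $\sum_j \partial_x u_j(0)=0$ that only makes sense for $s>3/2$; for $1/2<s\le 3/2$ the compatibility reduces to continuity at the node, which is preserved by the flow because the lift $\underline{\varphi}$ used in the decomposition $\underline{v}=\underline{w}+a\underline{\varphi}$ belongs to $\mathbb{H}^2_e(\mathcal{T})$, and this is exactly the point where the adaptation of \cite[Theorem 2.3]{rosier} to the network setting has to be checked carefully.
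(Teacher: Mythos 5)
The paper itself offers no proof of this corollary beyond the phrase ``as consequence and according to \cite[Theorem 2.3]{rosier}'', so your proposal is in effect supplying the argument that the authors delegate to that reference. Your overall architecture is the natural one and matches what \cite{rosier} does: a contraction in $C([0,T],\mathbb{H}^s)$ using the algebra property of $H^s$ for $s>1/2$ together with the smoothing of $(I-\Delta)^{-1}_{\mathcal T}\partial_x$, followed by the observation that the compact embedding $\mathbb{H}^1_e(\mathcal T)\hookrightarrow\mathbb{H}^s_e(\mathcal T)$ for $s<1$ upgrades the weak convergence of Theorem \ref{BBM} to strong convergence, and a LaSalle-type identification of the limit under the UCP.

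There is, however, a genuine gap in your globalization step. For $1/2<s<1$ you assert that the energy identity ``controls the $\mathbb{H}^1$-norm hence a fortiori the $\mathbb{H}^s$-norm for $s\le 1$''. This is circular: if $\underline u^0$ lies only in $\mathbb{H}^s_e(\mathcal T)$ with $s<1$, then $E(0)=+\infty$ and \eqref{dissipae1} gives no a priori bound at all, while the contraction argument by itself only produces a maximal existence time. The passage from local to global well-posedness in this range is precisely the nontrivial content of \cite{BT} and of \cite[Theorem 2.3]{rosier}: one must split the datum into a smooth large part (which is in $\mathbb{H}^1_e$ and is controlled by the energy) and an $\mathbb{H}^s$-small rough remainder whose smallness is propagated, or otherwise derive an a priori $H^s$ bound; the quadratic estimate $\|u\partial_x u\|_{H^{s-1}}\lesssim\|u\|_{H^s}^2$ alone permits finite-time blow-up of the iteration. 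Relatedly, your compactness argument opens with ``$\underline u(t_n)\rightharpoonup \underline v^0$ in $\mathbb{H}^1_e(\mathcal T)$'', which presupposes that the trajectory is bounded in $\mathbb{H}^1_e(\mathcal T)$, i.e.\ that $\underline u^0\in\mathbb{H}^1_e(\mathcal T)$; for data that are genuinely only in $\mathbb{H}^s_e(\mathcal T)$ with $s<1$ you would need a separate compactness mechanism. These two points are exactly where the adaptation of \cite[Theorem 2.3]{rosier} to the network setting must be carried out, and they are not covered by the ``a fortiori'' remark.
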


\bibliographystyle{siam}

\end{document}